\documentclass[11pt,notitlepage,twoside,a4paper]{amsart}
 \usepackage{amsfonts}

\usepackage{amsmath,amssymb,enumerate}

\usepackage{epsfig,fancyhdr,color}%,showkeys,amsmidx

\usepackage{amssymb}
\usepackage{amsmath,amsthm}
\usepackage{latexsym}
\usepackage{amscd}
\usepackage{psfrag}
\usepackage{graphicx}
\usepackage[latin1]{inputenc}
\usepackage[all]{xy}
\usepackage[mathcal]{eucal}

\definecolor{NoteColor}{rgb}{1,0,0}

% ----------- MACROS

\renewcommand{\textsc}{\textcolor{red}}

%

% ---------------------------------------

\newtheorem{theorem}{\rm\bf Theorem}[section]
\newtheorem{proposition}[theorem]{\rm\bf Proposition}
\newtheorem{lemma}[theorem]{\rm\bf Lemma}

\newtheorem*{theorem 1}{\rm\bf Proposition 1}
\newtheorem*{theorem 2}{\rm\bf Proposition 2}

\theoremstyle{definition}
\newtheorem{definition}[theorem]{\rm\bf Definition}

\theoremstyle{remark}

\newtheorem*{problem}{\rm\bf Problem}

\def\interieur#1{\mathord{\mathop{\kern 0pt #1}\limits^\circ}}

% ---------------------------------------

\title[Transitional geometry]{transitional geometry}

\author{Norbert A'Campo and Athanase Papadopoulos}
 \address{Universit\"at Basel,  Mathematisches Institut, 
\\
Rheinsprung 21, 4051 Basel, Switzerland
\\
and 
Erwin Schr\"odinger International Institute of Mathematical Physics
\\
Boltzmanngasse 9, 1090, Wien, Austria
}
\email{norbert.acampo@gmail.com} 
\address{Athanase Papadopoulos,  Institut de Recherche Math\'ematique Avanc\'ee,
Universit\'e de Strasbourg and CNRS,
7 rue Ren\'e Descartes,
 67084 Strasbourg Cedex, France
 \\
Erwin Schr\"odinger International Institute of Mathematical Physics
\\
Boltzmanngasse 9, 1090, Wien, Austria
\\
and Galatasaray University,
Ciragan Cad. No:36, Ortak\"oy, 34349 Istanbul
} 
 \email{athanase.papadopoulos@math.unistra.fr}

\date{\today}

% ---------------------------------------

\begin{document}

\begin{abstract}
We develop a transitional geometry, that is, a family of geometries of constant curvatures which makes a continuous connection between the hyperbolic,  Euclidean and spherical geometries.
 In this transitional setting,  several geometric entities like points, lines, distances, triangles, angles, area, curvature, etc. as well as trigonometric formulae and other properties transit in a continuous manner from one geometry to another. 
 
 \medskip

\bigskip 

\noindent AMS classification: 01-99 ; 53-02 ; 53-03 ; 53A35.

\bigskip 

\noindent Keywords: transformation group; projective geometry; elliptic geometry; spherical geometry; Lobachevsky geometry; non-Euclidean geometry;  transitional geometry, coherent element, family, deformation.

 \bigskip

 \noindent The final version of this paper will appear in the volume \emph{Lie and Klein: The Erlangen program and its impact in mathematics and physics}, ed. L. Ji and A. Papadopoulos,  Z\"urich,  European Mathematical Society (EMS).

\noindent  The authors were supported by  the French ANR project FINSLER, by the GEAR network of the National Science Foundation (GEometric structures And Representation varieties) and by the Erwin Schr\"odinger Institute (Vienna). The second author was also supported by a TUBITAK 2221 grant for a visit to Galatasaray University (Istanbul).

\bigskip

 \end{abstract}
\maketitle

\tableofcontents

 \section{Introduction} \label{s:tr}
 
In this paper, we relate the three geometries of constant curvature by constructing a continuous transition between them.  We construct a fiber space $\mathcal{E}\to [-1,1]$, where the fiber above each point $t\in [-1,1]$ is a two-dimensional space of constant curvature which is positive for $t>0$ and negative for $t<0$ and such that when $t$ tends to $0$ from  each side, the geometry converges to the geometry of the Euclidean plane. Making the last statement precise will consist in showing that segments, angles, figures and several properties and propositions of the two non-Euclidean geometries converge in some appropriate sense to those of the Euclidean one. We say that the geometries \emph{transit}. At the level of the basic notions (points, lines, distances, angles, etc.) and of the geometrical properties of the figures (trigonometric formulae, area, etc.), we say that they  \emph{transit in a coherent way}.\index{coherent}\index{transition of geometries} This is expressed by the existence of some analytic sections of the fiber space $\mathcal{E}$ which allows us to follow these notions and properties. The group-theoretic definitions of each of the primary notions of the geometries of constant curvature (points, lines, etc.) make them transit in a coherent way.

 The idea of transition of geometries was already emitted by Klein in \S 15 of his paper \cite{Klein-Ueber} (see also the comments in   \cite{ACPK1}). We note however that the abstract theoretical setting of transformation groups was not really developed at the time Klein wrote the paper \cite{Klein-Ueber}. The notion of ``classical group", which is probably the most convenient setting for the description of these groups, was developed later, in works of Weyl \cite{Weyl-classical}, Dieudonn\'e \cite{Dieudonne-classical} and others. 
 
 The construction we describe here is also developed in Chapter 9 of our \emph{Notes on hyperbolic geometry} \cite{ACP}. We review the basic ideas and we add to them some new results and comments. Beyond the fact that it establishes relations between the three classical geometries, this theory highlights at the same time the important notions of \emph{families} and of \emph{deformations}.
  
  \section{The fiber space $\mathcal{E}$}
   
We work in a fiber space over $[-1,1]$ where the fibers above each point are built out of the groups of transformations of the three geometries. Each fiber is a geometry of constant curvature considered as a homogeneous space in the sense of Lie group  theory, that is, a space of cosets $G/H_0$ where $G$ is a Lie group -- the orthogonal group of some quadratic form -- and $H_0$ the stabilizer of a point. This is an example of the classical concept of \emph{Klein geometry},\index{Klein geometry} see e.g. \cite{Frances} in this volume.

When the parameter $t\in [-1,1]$ is negative, the homogeneous space is the hyperbolic plane of a certain constant negative curvature. When it is positive, the homogeneous space is the sphere (or the elliptic plane) of a certain constant positive curvature. When the parameter is zero, the homogeneous space is the Euclidean plane. We can define points, lines and other notions in the geometry using the underlying group. For instance, a \emph{point}\index{point} will be a maximal abelian compact subgroup of $G$ (which can be thought of as the stabilizer group of the point in the ambient group). The \emph{space} of the geometry is the set of points, and it is thus built out of the group. A \emph{line} is a maximal set of maximal abelian compact subgroups having the property that the subgroup generated by any two elements in this set is abelian. Other basic elements of the geometry (angle, etc.) can be defined in a similar fashion. This is in the spirit of Klein's Erlangen program where a geometry consists of a group action. In this case, the underlying space of the geometry is even constructed from the abstract group. The definitions are made in such a way that points, distances, angles, geometrical figures and trigonometric formulae vary continuously from hyperbolic to spherical geometry, transiting through the Euclidean, and several phenomena can be explained in a coherent manner.

To be more precise, we consider the vector space $\mathbb{R}^3$ equipped with a basis, and we denote the coordinates of a point $p$ by $(x(p),y(p),z(p))$ or more simply $(x,y,z)$.

We recall that the isometry groups of the hyperbolic plane and of the sphere are respectively the orthogonal groups of the quadratic form
\[
(x,y,z)\mapsto -x^2-y^2+z^2
\]
   and 
 \[(x,y,z)\mapsto  x^2+y^2+z^2.\]  
 Introducing a nonzero real parameter $t$ does not make a difference at the level of the axioms of the geometries (although it affects the curvature): for any $t<0$ (respectively $t>0$) the isometry group of the hyperbolic plane (respectively the sphere) is isomorphic to the orthogonal group of the quadratic form  
\begin{equation}\label{qf}
q_t:(x,y,z)\mapsto tx^2+ty^2+z^2.
\end{equation} 

We wish to include the Euclidean plane in this picture.

The first guess to reach the Euclidean plane is to give the parameter $t$ the value 0. This does not lead  to the desired result. In fact, although the orthogonal groups of the quadratic forms $q_t$, for $t>0$ (respectively for $t<0$) are all isomorphic, they are not uniformly bounded in terms of $t$, and when $t\to 0$, their dimension blows up.
 Indeed, when $t\to 0$ from either side, the quadratic form $q_t$ reduces to  
  \[(x,y,z)\mapsto z^2\] whose orthogonal group is much larger than the isometry group of the Euclidean plane. 
      
  Thus, an adjustment is needed. For this, we shall introduce the notion of ``coherent element". This will make the Euclidean plane automorphism group (and the Euclidean plane itself) appear in a continuous way between the hyperbolic and spherical automorphism group (respectively the hyperbolic plane and the sphere).  
  
We now define the fiber space $\mathcal{E}$, equipped with its fibration
 \[\mathcal{E}\to [-1,1].
 \]
The fiber $E_t$ above each $t\in [-1,1]$ will be a space of constant positive curvature $t^2$ for $t>0$ and of constant negative curvature $-t^2$ for $t<0$, such that when $t$ converges to $0$ from either side, $E_t$ converges to the Euclidean plane. 
     
 We explain this now. It will be useful to consider the elements of the space $E_t$ as matrices, and when we do so we shall denote such an element by a capital letter $A$.

Let $J\subset\mathrm{GL}(3,\mathbb{R})\times [-1,1]\to [-1,1]$ be the fibration whose fiber $J_t$ above $t$ is
the subgroup of $\mathrm{GL}(3,\mathbb{R})$ consisting of the stabilizers of the form $q_t$. Finally, let $I\subset J$
be defined by taking fiberwise the connected component of the identity element in the orthogonal group of the quadratic form $q_t$, for each $t\in [-1,1]$. 
 
In other words, for every $t\not=0$, $I_t$ is the orientation-preserving component of the orthogonal group of the quadratic form $q_t$.  It is a Lie group of dimension 3. However, as we already noticed, $I_0$ is a Lie group  of dimension 6.  It is the matrix group
\[I_0= \{A=(A_{ij})\in \mathrm{GL}(3,\mathbb{R}) \ \vert \ A_{3,1}=A_{3,2}=0, A_{3,3}=1, \mathrm{det}(A)>0\}.\]
This is the group of matrices of the form
\[\left( \begin{array}{ccc}
a & b & e \\
c & d & f \\
0 & 0 & 1 
\end{array} \right)\] 
with $ad-bc>0$. 
The fact that the determinant is positive is a consequence of the orientation-preserving assumption. Thus, $I_0$ is the usual matrix representation group of the orientation-preserving group of affine transformations of $\mathbb{R}^2$. Equivalently, it is the group of matrices preserving the $(x,y)$-plane in $\mathbb{R}^3$. (One can deduce this fom the fact that the affine group is the subgroup of the group of projective transformations that preserve a hyperplane in projective space.) This shows again that $I_0$ is not isomorphic to the group of orientation-preserving Euclidean motions of the plane. We shall reduce the size of $I_0$, by restricting the type of matrices that we consider.

\begin{definition}[Coherent element]
An element $A$ of $I_0$ is said to be \emph{coherent} if for all $i$ and $j$ satisfying $1\leq i,j\leq 3$, there exists an analytic function $A_{i,j}(t)$ such that for each $t\in [-1,1]$ the matrix $A_t=\left(A_{i,j}(t)\right)$ belongs to $I_t$, and $A_0=A$. 
\end{definition}

\section{The space of coherent elements}
We denote by $E\subset I \subset   \mathrm{GL}(3,\mathbb{R}) \times  [-1,1] $ the set of coherent elements. For each $t\in [-1,1]$ we denote by $E_t$ the set of coherent elements that are above the point $t$.

From the definition, we have $E_t=I_t$ for all $ t\not=0$. We now study $E_0$. 

 The coherent elements of $I_0$ form a group and they have important features. We start with the following:
\begin{proposition}
If $A\in I_0$ is coherent, then $\mathrm{det}(A)=1$.
\end{proposition}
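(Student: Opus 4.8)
The plan is to exploit analyticity together with the fact that the determinant is locally constant on each orthogonal group. First I would record the defining relation for the fibers: writing $Q_t$ for the symmetric matrix $\mathrm{diag}(t,t,1)$ associated with the form $q_t$, a matrix $B$ lies in the orthogonal group of $q_t$ precisely when $B^\top Q_t B = Q_t$. Taking determinants gives $\det(B)^2\,\det(Q_t)=\det(Q_t)$, and since $\det(Q_t)=t^2\neq 0$ for $t\neq 0$, this forces $\det(B)^2=1$, that is, $\det(B)=\pm 1$.

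Next I would pin down the sign. Since $I_t$ is by definition the connected component of the identity in this orthogonal group, and since $\det$ is a continuous map into the discrete set $\{\pm 1\}$ taking the value $1$ at the identity matrix, it is constantly equal to $1$ on the connected set $I_t$. Hence, if $A_t=(A_{i,j}(t))$ denotes the analytic family witnessing the coherence of $A$, we have $A_t\in I_t$ and therefore $\det(A_t)=1$ for every $t\neq 0$.

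Finally I would pass to the limit $t\to 0$. The map $t\mapsto \det(A_t)$ is a polynomial in the entries $A_{i,j}(t)$, each of which is analytic on $[-1,1]$ by the definition of a coherent element; hence $t\mapsto\det(A_t)$ is itself analytic, and in particular continuous. It equals $1$ on the dense subset $[-1,1]\setminus\{0\}$, so by continuity $\det(A_0)=1$. Since $A=A_0$, this is exactly the assertion $\det(A)=1$.

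The argument is short, and the only point requiring genuine care is the determination of the sign of $\det$ on the fibers $I_t$: the orthogonality relation by itself yields only $\det=\pm 1$, and it is the restriction to the identity component (rather than to the full orthogonal group, which does attain the value $-1$ through reflections) that rules out $\det=-1$. Once that is settled, the transition to $t=0$ is a routine continuity argument that uses nothing beyond the analyticity already built into the notion of coherence.
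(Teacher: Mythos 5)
Your proof is correct and follows essentially the same route as the paper: establish $\det(A_t)=1$ on the fibers for $t\neq 0$ and then pass to $t=0$ by continuity of $t\mapsto\det(A_t)$, which coherence guarantees. You are in fact slightly more careful than the paper, which asserts $\det=1$ directly from orthogonality of a non-degenerate form (this alone gives only $\pm 1$), whereas you correctly invoke the restriction to the identity component $I_t$ to fix the sign.
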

\begin{proof} For $t\not=0$, we have $\mathrm{det}(A)=1$ since $A$ is an orthogonal matrix of a non-degenerate quadratic form. The result then follows from the definition of coherence and from the fact that the map $t\mapsto \mathrm{det}(A_t)$ is continuous. 
\end{proof}

We also have the following:

\begin{proposition} \label{pro:coh}
An element $A\in I_0$ is coherent if and only if $A$ is an orientation-preserving motion of the Euclidean plane.
\end{proposition}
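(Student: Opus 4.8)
The plan is to prove the two implications separately. Throughout I write $Q_t=\mathrm{diag}(t,t,1)$ for the matrix of $q_t$, so that a matrix $M$ lies in the orthogonal group of $q_t$ exactly when $M^{\top}Q_tM=Q_t$. Recall that an element $A\in I_0$ has third row $(0,0,1)$ and satisfies $\det A=A_{11}A_{22}-A_{12}A_{21}>0$, and that an orientation-preserving motion of the Euclidean plane is precisely an element of $I_0$ whose upper-left $2\times 2$ block lies in $\mathrm{SO}(2)$.

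For the direction \emph{coherent $\Rightarrow$ motion}, suppose $A$ is coherent and let $A_t=(A_{ij}(t))$ be an analytic family with $A_t\in I_t$ and $A_0=A$. Since $A_{31}(0)=A_{32}(0)=0$ and the entries are analytic, we have $A_{31}(t)=O(t)$ and $A_{32}(t)=O(t)$. First I would write down the $(1,1)$, $(2,2)$ and $(1,2)$ entries of the identity $A_t^{\top}Q_tA_t=Q_t$, namely
\[
t\big(A_{11}^2+A_{21}^2\big)+A_{31}^2=t,\quad t\big(A_{12}^2+A_{22}^2\big)+A_{32}^2=t,\quad t\big(A_{11}A_{12}+A_{21}A_{22}\big)+A_{31}A_{32}=0 .
\]
Because $A_{31}^2$, $A_{32}^2$ and $A_{31}A_{32}$ are $O(t^2)$, dividing by $t$ and letting $t\to 0$ leaves $A_{11}(0)^2+A_{21}(0)^2=1$, $A_{12}(0)^2+A_{22}(0)^2=1$ and $A_{11}(0)A_{12}(0)+A_{21}(0)A_{22}(0)=0$. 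Thus the upper-left block of $A$ lies in $\mathrm{O}(2)$, and since its determinant is positive it lies in $\mathrm{SO}(2)$; hence $A$ is an orientation-preserving Euclidean motion.

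For the converse I would exhibit explicit coherent families. Introduce the entire functions $C_t(\phi)=\sum_{n\ge 0}(-t)^n\phi^{2n}/(2n)!$ and $S_t(\phi)=\sum_{n\ge 0}(-t)^n\phi^{2n+1}/(2n+1)!$, which are analytic in $t$ and satisfy $C_t^2+tS_t^2=1$ (they specialize to $\cos,\tfrac{1}{\sqrt t}\sin$ for $t>0$ and to $\cosh,\tfrac{1}{\sqrt{-t}}\sinh$ for $t<0$). A direct check shows that the matrix
\[
R^{xz}_t(\phi)=\begin{pmatrix} C_t(\phi) & 0 & S_t(\phi)\\ 0 & 1 & 0\\ -tS_t(\phi) & 0 & C_t(\phi)\end{pmatrix}
\]
satisfies $R^{\top}Q_tR=Q_t$ and lies in the identity component $I_t$, and that $R^{xz}_0(\phi)$ is the translation by $\phi$ in the $x$-direction; the analogous matrix $R^{yz}_t(\psi)$ mixing the $y$- and $z$-axes gives translation by $\psi$ in the $y$-direction, while the ordinary planar rotation $R^{xy}(\theta)$ lies in $I_t$ for every $t$. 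Since the coherent elements form a group, any product of these families is coherent, and at $t=0$ suitable products realize every rotation-plus-translation of the plane; hence every orientation-preserving Euclidean motion is coherent.

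I expect the converse to be the delicate step. The naive idea --- take a fixed $q_t$-rotation angle scaling like $\sqrt{t}$ so that a bounded translation survives in the limit --- fails because $\sqrt{t}$ is not analytic at $t=0$, violating coherence. The point of the functions $C_t,S_t$ is exactly that they keep the rotation parameter $\phi$ fixed while carrying all the $t$-dependence in an entire (hence analytic) way, so that the family is analytic across $t=0$ and degenerates precisely to a translation. Verifying that these matrices stay in the orientation-preserving component $I_t$ for all $t$ (rather than merely in the full orthogonal group) is the remaining routine point, handled by connectedness to the identity as $\phi\to 0$.
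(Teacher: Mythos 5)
Your forward implication (coherent $\Rightarrow$ orientation-preserving Euclidean motion) is essentially the paper's own argument: the paper likewise rescales to $\frac{1}{t}q_t$, uses analyticity of the section to write $A_{31}(t)=\lambda t+O(t^2)$ so that $\frac{1}{t}A_{31}(t)^2\to 0$, and concludes that the upper-left $2\times 2$ block lies in $\mathrm{O}(2)$, hence in $\mathrm{SO}(2)$ by positivity of the determinant; your version, phrased via $A_t^{\top}Q_tA_t=Q_t$ divided by $t$, is the same computation. Where you genuinely go beyond the paper is the converse: the paper's displayed proof stops after the forward direction and never exhibits, for a given orientation-preserving motion $A$, an analytic family $A_t\in I_t$ with $A_0=A$, so as written it establishes only half of the ``if and only if''. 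Your construction with the entire functions $C_t$ and $S_t$ (which interpolate $\cos$, $\cosh$ and the normalized $\sin$, $\sinh$ analytically across $t=0$) supplies exactly the missing coherent sections: the identity $C_t^2+tS_t^2=1$ does give $R^{\top}Q_tR=Q_t$ for your matrices, $R^{xz}_0$ and $R^{yz}_0$ are the two basic translations while $R^{xy}$ carries the rotations, membership in the identity component follows from the path $\lambda\mapsto R_t(\lambda\phi)$, and closure of the coherent elements under products (entrywise sums and products of analytic sections are analytic) finishes the argument. Your diagnosis that the converse is the delicate step --- because the naive $\sqrt{t}$-rescaling of a rotation angle is not analytic at $t=0$ --- is exactly right, and the explicit families are a worthwhile complement to the paper's proof.
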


\begin{proof}

Let  $A\in I_0$ be a coherent element. From the definition, we have $A=\lim_{t\to0} A(t)$, with $A(t)\in I_t$, $ t>0$. Thus, the 9 sequences $A_{ij}(t)$, for $1\leq i,j\leq 3$, converge and therefore they are bounded. For $t\not=0$, since $A(t)$ preserves the quadratic form $q_t$, therefore it also preserves the form $\frac{1}{t}q_t$.  For $i=1,2,3$, let  $A_i(t)$ denote the $i$-th column of $A(t)$. We show that for $i,j<3$, the  $\frac{1}{t}q_t$-scalar product of  $A_i(t),A_j(t)$ is the $i,j$-th coefficient of the Kronecker delta function.

We have 
\[\frac{1}{t}q_t(A)= 
\left( \begin{array}{ccc}
A_{11}(t) & A_{21}(t) & A_{31} (t)
\end{array} \right)
\left( \begin{array}{ccc}
1 & 0 &0  \\
0 & 1 &0  \\
0 &0 & \frac{1}{t} \\
\end{array} \right)
\left( \begin{array}{ccc}
A_{11} (t)&   \\
A_{21}(t) &   \\
A_{31} (t)& \\
\end{array} \right)
\]
\[
= A_{11}(t)^2+A_{21}(t)^2+\frac{1}{t} A_{31}(t)^2.
\]

Since
$A(t)$ preserve $\frac{1}{t}q_t$, we get $A_{11}(t)^2+A_{21}(t)^2+\frac{1}{t} A_{31}(t)^2=1$ for all $t\not=0$. 
Since the sequences $ A_{ij}(t)$ converge, we obtain $\lim_{t\to 0} A_{31}(t)=0$, therefore the coherence property imposes $A_{31}(0)=0$. Now we use the fact that the section $A_{31}(t)$ is differentiable (recall that by coherence, it is even analytic). This gives $A_{31}(t)=0+\lambda t+O(t^2)$, therefore $ \frac{1}{t} A_{31}(t)^2=\lambda^2 t+O(t^3)$, which implies  $\lim_{t\to 0}\frac{1}{t} A_{31}(t)^2=0$. We get $A_{11}(0)^2+A_{21}(0)^2=1$.

In the same way, we prove that $A_{12}(0)^2+A_{22}(0)^2=1$ and $A_{11}(0)A_{12}(0)+A_{21}(0)A_{22}(0)=0$. 

It follows that the first principal 2-bloc of 
$A$ is an orthogonal matrix, which implies that $E_0$ is a direct isometry group of the Euclidean plane. In other words,  the $2\times
 2$ matrix 
\[\left( \begin{array}{cc}
A_{11} & A_{12}  \\
A_{21} & A_{22}  \\
\end{array} \right)\] 
preserves the standard quadratic form on $\mathbb{R}^2$. Thus, it is an element of the Euclidean rotation group $\mathrm{SO}(2)$.
\end{proof}

\noindent \emph{Note.---} In the proof of the preceding proposition, we used the fact that the coherence property imposes that the sections $A_{ij}$ are differentiable. There is another proof in \cite{ACP} (Proposition 2.2) which only uses the existence of  sections that are only continuous.
  
  \medskip
  
 The set of coherent elements of $I_0$, being the group of orientation-preserving motions of the Euclidean plane, is the group we are looking for.

From now on, we use the notation $I_0$ for the subgroup of \emph{coherent}\index{coherent} elements instead of the previously defined group $I_0$. Thus, in the following, for each $t\in [-1,1]$, any element of $I_t$ is coherent.

The space of coherent elements (for variable $t$) is not a locally trivial fiber bundle (for instance, the topological type of the fiber $I_t$ is not constant; it is compact for $t>0$ and noncompact for $t<0$). However this space has nice properties which follow from the fact that it has many continuous (and even analytic) sections.  

\section{The algebraic description of points and lines} 
  
   For every $t\in [-1,1]$, we define $E_t$ as the set of maximal abelian compact subgroups of $I_t$.  An element of $E_t$ is a \emph{point} of our geometry. Note that any maximal abelian compact subgroups of $I_t$ is isomorphic to the circle group $\mathrm{SO}(2)$.  With this in hand, a coherent family of points in the fiber space $(E_t)$ is a coherent family of maximal abelian compact subgroups of $I_t$. This is a family depending analytically on the parameter $t$. We can also consider coherent families of pairs (respectively triples, etc.) of points. This defines the segments (respectively triangles, etc.) of our geometries. We can study the corresponding distance (respectively area, etc.) function of $t$ defined by such a pair or triple, in algebraic terms. 
   
   For each $p\in E_t$, we denote by $K_p\subset I_t$ the maximal subgroup that defines $p$.  Since $K_p$ is a group isomorphic to the circle group $\mathrm{SO}(2)$, for each $p\in E_t$, there exists a unique order-two element $s_p\in K_p$. We shall make use of this element. In the circle group $\mathrm{SO}(2)$, this corresponds to the rotation of angle $\pi$.  We call $s_p$ the \emph{reflection}, or \emph{involution} in $I_t$, of center $p$. 
In this way, any point in $E_t$ is represented by an involution. For each $t\not=0$, an involution is a self-map of the space of our geometry (the sphere or the hyperbolic plane) that fixes the given point, whose square is the identity, and whose differential at the given point is $-\mathrm{Id}$.
    
This algebraic description of points in $E_t$  as involutions has certain advantages. In particular, we can define compositions of involutions and we can use this operation to describe algebraically lines and other geometric objects in $E_t$.

A \emph{line}\index{line} in $E_t$ is a maximal subset $L$ of $E_t$ satisfying the following property:
\begin{quote}
(*) \hskip .1in 
 The subgroup of $I_t$ generated by the all elements of the form $s_p s_{p'}$, for $p, p'\in L$, is abelian. In the the case $t=0$, we ask furthermore that the group is proper. (For $t\not=0$, this is automatic).
\end{quote}

In other words, each time we take four points $s_{p_{1}}, s_{p_{2}}, s_{p_{3}}, s_{p_{4}}$ in $E_t$  represented by involutions, then, $s_{p_{1}}s_{p_{2}}$ commutes with $s_{p_{3}}s_{p_{4}}$.

Given two distinct points in $E_t$, there is a unique line joining them; this is the maximal subset $L$ of $E_t$ containing them and satisfying property (*) above.

 The group $I_t$ acts by conjugation on $E_t$ and by reflections along lines.
 
With these notions of points and lines defined group-theoretically, one can check that the postulates of the geometry can be expressed in group theoretic terms: any two distinct points belong to a line, two lines intersect in at most one point. (For this to hold, in the case $t>0$, one has to be in the projective plane and not on the sphere.)
 
We now introduce circles. Let $s$ be a point represented by a subgroup $K_p$ and an involution $s_p$. Any element $\rho$ in $K_p$ acts on the space of points $E_t$ by conjugation:
\[(\rho, K_p)\mapsto \rho K_p\rho^{-1}.\] A \emph{circle}\index{circle} of center $p$ in our geometry is an orbit of such an action. For $t>0$ and for every point $p$, among such circles, there is one and only one circle centered at $p$ which is a line of our geometry.

 A \emph{triangle}\index{triangle} in our space is determined by three distinct points with three lines joining them pairwise together with the choice of a connected component of the complement of these lines which contains the three given points on its boundary. The last condition is necessary in the case of the sphere, snce, in general, three lines divide the sphere into eight connected components.

Let us consider in more detail the case $t>0$. This case is particularly interesting, because we can formulate the elements of spherical polarity\index{polarity (sphere)} theory in this setting.  
We work in the projective plane (elliptic space) rather than the sphere. In this way,  the set of points of the geometry $E_t$  ($t>0$) can be thought of as the set of unordered pairs of antipodal points on the sphere in the $3$-dimensional Euclidean space $(\mathbb{R}^3, q_t)$. 

To each line in $E_t$ is associated a well-defined point called its \emph{pole}.\index{pole}  In algebraic terms (that is, in our description of points as involutions), the pole of a line is the unique involution $s_p$ which, as an element of $I_t$, fixes globally the line and does not belong to it.

Conversely, to each point $p$, we can associate the line of which $p$ is the pole. This line is called the \emph{equator}\index{equator} of $p$. There are several equivalent algebraic characterizations of that line. For instance, it is the unique line $L$ such that for any point $q$ on $L$, the involution $s_q$  fixes the point $p$.
In other words, the equator of a point $s_p$ is the set of points $q\not= p$ satisfying  $s_q(p)=p$.\footnote{This correspondence between poles and lines holds because we work in the elliptic plane, and not on the sphere. In the latter case, there would be two ``poles", which are exchanged by the involution $s_q$ associated to a point $q$ on the line.}

This correspondence between points and lines is at the basis of duality theory.

If two points $p$ and $q$ in $E_t$ are distinct,  the product of the corresponding involutions $s_p$ and $s_q$ is a translation along the line joining these points. More concretely, $s_p s_q$ is  a rotation along the line in 3-space which is perpendicular to the plane of the great circle determined by $p$ and $q$. This line passes through the pole $s_N$ of the great circle, and therefore the product  $s_p s_q$ commutes with the pole $s_N$ (seen as an involution).

Given $p\in E_t$, the equator of $p$ is the set of all $q\in E_t$ whose image under the stabilizer of $p$ in $I_t$ is a straight line.

In spherical (or elliptic)  geometry, a symmetry with respect to a point is also a symmetry with respect to a line.  This can be seen using the above description of points as involutions. From the definition, the involution corresponding to the pole of a line fixes the pole, but it also fixes pointwise the equator. The pole can be characterized in this setting as being the unique \emph{isolated} fixed point of its involution. The set of other fixed points is the equator. Thus, to a line in elliptic geometry is naturally associated an involution.

 We can use polarity to define perpendicularity between lines: Consider two lines $L_1$ and $L_2$ that intersect at a point $p$. Then $L_ 1$ and $L_2$ are \emph{perpendicular}\index{perpendicular} if the pole of $L_1$ belongs to $L_2$. This is equivalent to the fact that the polar dual to $L_2$ belongs to $L_1$. This also defines the notion of \emph{right angle}.\index{right angle}

In what follows, we shall measure lengths in the geometry $E_t$ for $t>0$. We know that in spherical (or elliptic) geometry, there is a natural length unit.\index{length unit (sphere)} The length unit in this space can be taken to be the diameter of a line (all lines in that geometry are homeomorphic to a circle), or as the diameter of the whole space (which is compact). We can also use the correspondence between lines and poles and define a normalized distance on $E_t$, by fixing once and for all the distance from a point to its equator.  
We normalize this distance by setting it equal to  $\displaystyle \frac{\pi}{2\sqrt{t}}$. (Remember that each space $E_t$, for $t>0$, is the sphere of constant curvature $t^2$.)
 
 After defining the distance on $E_t$, we can check that the segment which joins a pole to a point on its equator is the shortest path to the equator.

\section{Transition of points, lines, distances, curvature and triangles} \label{s:trans}
Before we continue, let us summarize what we did. We chose of a fixed basis for $\mathbb{R}^3$. We defined the space $\mathcal{E}=(E_t)_{t\in [-1,1]}$  of coherent elements as a subset of a space of matrices, equipped with a map onto $[-1,1]$. A point $p=(x,y,z)$ in $\mathbb{R}^3$ equipped with a basis gives a group-theoretically defined point in $E_t$ for every $t\in [-1,1]$ with $t\not=0$, namely, the stabilizer of the vector whose coordinates are $(x,y,z)$ in the group of linear transformations of $\mathbb{R}^3$ preserving the form $q_t$. Using the coherence property, we extended to $t=0$, and for any $t\in [-1,1]$, we defined a point $A_t$ in $E_t$ as a stabilizer group under the action of the group $I_t$, namely, a maximal abelian subgroup $K_t$ of $I_t$. The family $A_t$, $t\in [-1,1]$ is a coherent family of points. 
This group-theoretic definition of points allows the continuous transition of points in the space $\mathcal{E}$. Likewise, the group-theoretic definition of lines makes this notion coherent transit from one geometry to another. 

Now we introduce the notion of angle. We want a notion that transits between the various geometries. This can also  be done group-theoretically in a coherent manner. There are various ways of doing it, and one way is the following.  We start with the  group-theoretic description of a point as a maximal compact abelian subgroup (the ``stabilizer" of the point). Such a subgroup is isomorphic to a circle group. But the circle group is also the set of all oriented lines starting at the given point. It is equipped with its natural Haar measure, which we normalize so that the total measure of the circle is $2\pi$. This gives a measure on the set of oriented lines. From these measures, we obtain the notion of angle at every point. It is also possible to give a coherent  definition of an angle by using a formula, and we shall do this in \S \ref{s:angle}.  

Now we consider transition of distances.

To measure distances between points, we use a model for the space $E_t$ for each $t\in [0,1]$. We take the unit sphere $S_t$, that is, the space of vectors in $\mathbb{R}^3$ of norm one with respect to the quadratic form $q_t$. This is the subset defined by 
\begin{equation}\label{eq:ellipse}
S_t=\{(x,y,z)\ \vert \ tx^2+ty^2+z^2=1\} \subset \mathbb{R}^3.
\end{equation}
We take the quotient of this set by the action of $\mathbb{Z}_2$ that sends a vector to its opposite. Note that the unit sphere $S_t$ (before taking the quotient) is connected for $t>0$, and it has two connected components for $t<0$. After taking the quotient, all spaces become connected. The group of transformations of $\mathbb{R}^3$ which preserve the quadratic form $q_t$ acts transitively on $S_t/\mathbb{Z}_2$ for each $t$. For $t=0$, the unit sphere (before taking the quotient) is defined by the equation $z^2=1$, and it has two connected components, namely, the planes $z=1$ and $z=-1$.

In this way, the elements of the geometries $E_t$ can be seen either algebraically as cosets in a group (the isometry group modulo the stabilizer of a point) or, geometrically, as elements of the unit sphere quotiented by  $\mathbb{Z}_2$. 
 
 Now we define distances between pairs of points in each $E_t$ and then we study transition of distances.

 For $t\not= 0$, we use the angular distance in $E_t$. To do so, for each $t\not=0$, we let $\beta_t$ be the bilinear form associated to the quadratic form $q_t$, that is,
\[\beta_t(x,y)= \frac{q_t(x+y)-q_t(x)-q_t(y)}{2}.\]
We set, for every $x_t$ and $y_t$ in $E_t$,
\begin{equation}\label{eq:dis}
w_t(x_t,y_t)= \arccos \frac{\beta_t(x_t,y_t)}{\sqrt{q_t(x_t)}\sqrt{q_t(y_t)}}
\end{equation}
We call $w_t(x_t,y_t)$ the \emph{angular distance} between the points $x_t$ and $y_t$. For each $t\not=0$, the angular distance can be thought of as being defined on $S_t/\mathbb{Z}_2$, but also on the 2-dimensional projective plane, the quotient of $\mathbb{R}^3\setminus\{0\}/\mathbb{Z}_2$ by the nonzero homotheties.
As a distance on the space $E_t$, we shall take $w_t$ multiplied by a constant $c_t$ that we determine below. 
Note that although Equation (\ref{eq:ellipse}) describes an ellipsoid in the Euclidean orthonormal coordinates $(x,y,z)$, this surface, equipped with this angular metric induced from the quadratic form, is isometric to a round sphere. 
 
 For each $t\not=0$, equipped with the normalized distance function $c_tw_t$, the point set $E_t$ becomes a metric space. Its points and its geodesics coincide with the group-theoretically defined points and lines that we  considered in \S\ref{s:tr}. This can be deduced from the fact that the isometry group of the space acts transitively on points and on directions, and that both notions (the metric and the group-theoretic) are invariant by this action. There are other ways of seeing this fact. 
% 
% For each $t$, the group of isometries of the geometry $E_t$ that fixes a point is a one-dimensional rotation group isomorphic to the circle-group $\mathrm{SO}(2)$. 
 
 We can draw a picture of $S_t$ for each $t\in [0,1]$. In dimension 2, this is represented in Figure \ref{fig:1}. To get the 3-dimensional picture, this figure has to be rotated in space around the $y$-axis.

 \begin{figure}[!hbp]
\centering
 \psfrag{O}{\small $O$}
\psfrag{x}{\small $x$}
\psfrag{y}{\small $y$}
\psfrag{P}{\small $t>0$}
\psfrag{N}{\small $t<0$}
\psfrag{Z}{\small $t=0$}
\includegraphics[width=0.90\linewidth]{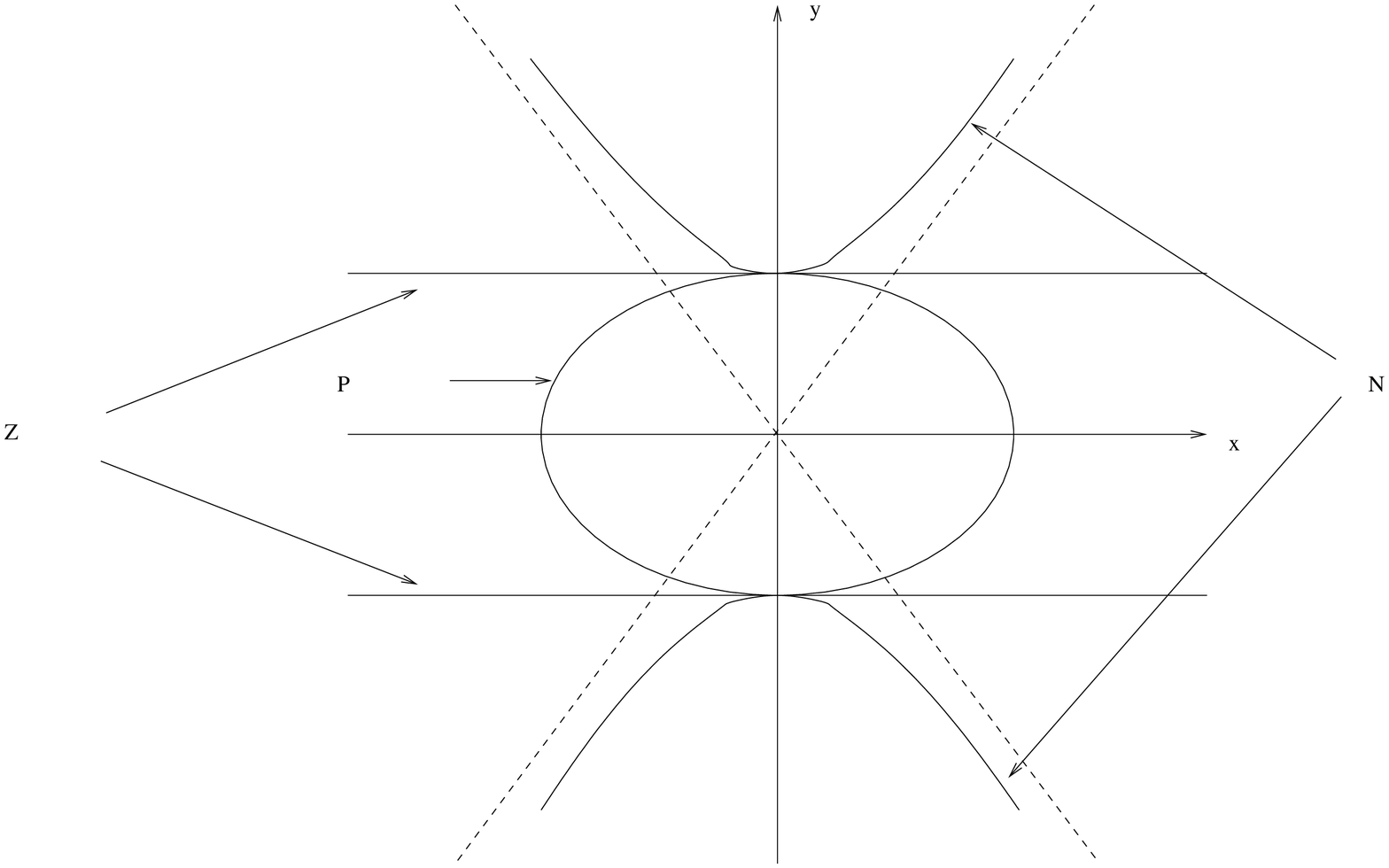}
\caption{\small }
\label{fig:1}
\end{figure}

Any vector in $\mathbb{R}^3$ defines a point in each geometry $E_t$ by taking the intersection of the ray containing it with $S_t$ (or in the quotient of this intersection by $\mathbb{Z}_2$). This is represented in Figure \ref{fig:2}, in which the vector is denoted by $P$, and where we see three intersection points with the level surfaces $S_t$: for $t>0$, $t=0$ and $t<0$. Note that the two points with Cartesian coordinates $(0,0,1)$ and $(0,0,-1)$ belong to all geometries (they belong to $S_t$ for any $t\in [-1,1]$). We shall use this in the following discussion about triangles, where some vertices will be taken to be at these points, and this will simplify the computations.

 \begin{figure}[!hbp]
\centering
 \psfrag{O}{\small $O$}
\psfrag{x}{\small $x$}
\psfrag{y}{\small $y$}
\psfrag{P}{\small $t>0$}
\psfrag{N}{\small $t<0$}
\psfrag{Z}{\small $t=0$}
\includegraphics[width=0.60\linewidth]{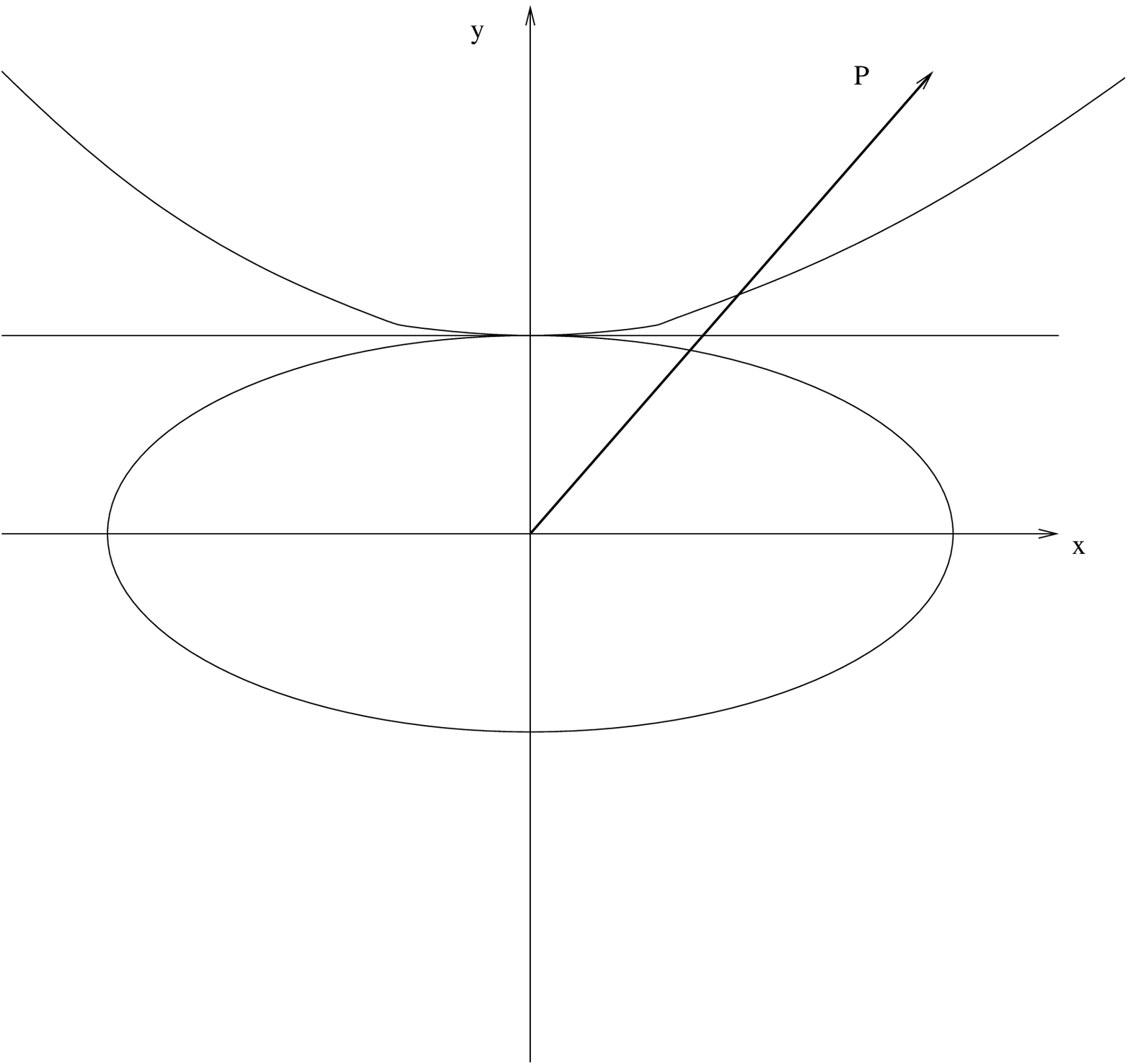}
\caption{\small }
\label{fig:2}
\end{figure}

  We now discuss transitions of distances.

We want the distance between two points in the geometry $E_0$ to be the limit as $t\to 0$ (from both sides,  $t>0$ and $t<0$) of the distance in $E_t$ between corresponding points (after normalization). The next proposition tells us how to choose the normalization factor $c_t$.

Let $A$ and $B$ be two points in $\mathbb{R}^3$. We have a natural way of considering each of these points to lie in $E_t$, for every $t\in [-1,1]$. Let us denote by $A_t,B_t$ the corresponding points.
\begin{proposition} \label{p:normalization}
 The limit as $t\to 0, \ t>0$ (respectively $t<0$), of the distance from $A_t$ to $B_t$ in $E_t$ normalized by the factor $1/\sqrt{t}$ is equal to the Euclidean distance between $A_0$ and $B_0$. 
\end{proposition}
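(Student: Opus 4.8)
The plan is to compute the angular distance $w_t(A_t,B_t)$ explicitly as a function of $t$ and to extract its leading-order behaviour as $t\to 0$. First I would exploit the scale-invariance of the ratio in \eqref{eq:dis}: since it is unchanged when $A_t$ or $B_t$ is multiplied by a positive scalar, I may replace the points $A_t,B_t\in S_t$ by any vectors lying on the same rays. Choosing representatives $A=(a_1,a_2,1)$ and $B=(b_1,b_2,1)$ (assuming both lie in the component meeting the plane $z=1$, which is the one identified with the Euclidean plane $E_0$), the corresponding points $A_0,B_0$ are exactly $(a_1,a_2,1)$ and $(b_1,b_2,1)$, so the target quantity is the planar distance $d:=\sqrt{(a_1-b_1)^2+(a_2-b_2)^2}$.

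With these representatives, $\beta_t(A,B)=t(a_1b_1+a_2b_2)+1$ while $q_t(A)=t(a_1^2+a_2^2)+1$ and $q_t(B)=t(b_1^2+b_2^2)+1$. The heart of the proof is a Taylor expansion of
\[
\cos w_t=\frac{t(a_1b_1+a_2b_2)+1}{\sqrt{t(a_1^2+a_2^2)+1}\,\sqrt{t(b_1^2+b_2^2)+1}}
\]
in powers of $t$ near $t=0$. Expanding each square root as $1+\tfrac{t}{2}(\cdots)+O(t^2)$ and multiplying out, the linear coefficient is
\[
(a_1b_1+a_2b_2)-\tfrac12\!\left(a_1^2+a_2^2+b_1^2+b_2^2\right)=-\tfrac12\,d^2,
\]
so that $\cos w_t=1-\tfrac12 d^2\,t+O(t^2)$. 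This algebraic identity — that the first-order coefficient is precisely minus half the squared Euclidean distance — is the crux of the matter, and it is what forces the normalization factor $1/\sqrt{t}$.

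It then remains to invert. For $t\to 0^+$ one has $\cos w_t\to 1$, hence $w_t\to 0$; using $1-\cos w=\tfrac12 w^2+O(w^4)$ gives $w_t^2=d^2\,t+O(t^2)$, so $w_t=d\sqrt{t}\,(1+O(t))$ and therefore $w_t/\sqrt{t}\to d$, as claimed. For $t\to 0^-$ the same expansion shows the ratio \emph{exceeds} $1$ (it equals $1+\tfrac12 d^2|t|+O(t^2)$), reflecting the fact that the $t<0$ fibers are hyperbolic; there the distance is governed by $\mathrm{arccosh}$ rather than $\arccos$, and from $\mathrm{arccosh}(1+u)=\sqrt{2u}\,(1+O(u))$ one obtains in the same way $w_t=d\sqrt{|t|}\,(1+O(t))$, so $w_t/\sqrt{|t|}\to d$.

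The only genuine subtlety — the step I would treat most carefully — is the two-sided nature of the limit: one must check that the argument of the inverse trigonometric/hyperbolic function lands on the correct side of $1$ for each sign of $t$, and that the $O(t^2)$ remainders are controlled well enough that dividing by $\sqrt{|t|}$ and passing to the limit is legitimate. Both points are settled by the explicit expansion above, since all the coefficients are polynomials in the fixed coordinates $a_i,b_j$; in particular the required sign ($\cos w_t<1$ for $t>0$ and $>1$ for $t<0$) is read off directly from the sign of the linear term $-\tfrac12 d^2\,t$.
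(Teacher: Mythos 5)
Your proof is correct, and it follows the same basic strategy as the paper's --- write out the angular distance from \eqref{eq:dis} and extract the leading term in $t$ --- but it is both more general and more complete. The paper specializes to the pair $a=(0,0,1)$, $b^x=(x,0,1)$, for which the quantities collapse to $q_t(a)=1$, $\beta_t(a,b^x)=1$, $q_t(b^x)=tx^2+1$, giving the exact closed form $w_t=\arccos\bigl(1/\sqrt{tx^2+1}\bigr)=|\arctan(\sqrt t\,x)|$, whose series $|\,x-\tfrac13 t x^3+\cdots|$ makes the limit $|x|$ immediate after dividing by $\sqrt t$; the reduction to this configuration is implicitly justified by homogeneity but is not spelled out. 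You instead take arbitrary representatives $(a_1,a_2,1)$, $(b_1,b_2,1)$ and identify the first-order coefficient of $\cos w_t$ as $-\tfrac12 d^2$ by completing the square --- this is the right invariant formulation and avoids any appeal to transitivity of the isometry group. You also explicitly treat $t<0$, observing that the ratio then exceeds $1$ so the distance must be read off via $\mathrm{arccosh}$ rather than $\arccos$; the paper's proof announces both signs but only carries out the computation for $t>0$, so your version actually fills a gap in the published argument. The one point worth stating for completeness is that for $t<0$ and $|t|$ small one has $q_t(A),q_t(B)>0$ (so the normalization is defined), which follows at once from $q_t(A)=1+t(a_1^2+a_2^2)$.
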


Using this result, we shall define the distance between points $A$ and $B$ in $E_0$ in such a way that the distances in Proposition \ref{p:normalization} vary continuously for $t\in [-1,1]$.

 \begin{proof}
For $t\in [-1,1]$, let $a=(0,0,1)\in \mathbb{R}^3$ and consider the
coherent family of points $A_t\in E_t, t\in [-1,1]$ represented by $a$. For $x\in \mathbb{R}$, let $B_t^x\in E_t$, 
$t\in [-1,1]$ be the coherent family of points represented by the stabilizer of the vector $b^x=(x,0,1)$ of $\mathbb{R}^3$. The family 
$[A_t,B_t^x]$, $t\in [0,1]$ is a coherent family of segments in $\mathcal{E}$.

We start with $t>0$. We compute the limit as $t\to 0, \ t>0$, of the distance from $A_t$ to $B_t^x$ in $E_t$, normalized by dividing it by $\sqrt{t}$. We have \[q_t(a)=1,\] 
\[q_t(b^x)=tx^2+1\]
 and 
 \[\beta_t(a,b^x)=1.\]

We use the angular distance from $A_t$ to $B_t^x$, measured with $q_t$, $t>0$. By Equation (\ref{eq:dis}), we have
 \begin{eqnarray*}
w_t(A_t, \beta_t^x)&=&\arccos\ (\frac{\beta_t(a,b^x)}{\sqrt{q_t(a)}\sqrt{q_t(b^x)}})\\
&=&\arccos (\frac{1}{\sqrt{tx^2+1}})\\
&=&\vert \arctan (\sqrt{t}x)\vert\\
& =& \vert \sqrt{t}x-\frac{1}{3}\sqrt{t^3}x^3+\frac{1}{5}\sqrt{t^5}x^5-\ldots\vert
\end{eqnarray*}
Taking $c_t=1/\sqrt{t}$ as a factor for distances in $E_t$, the distance $c_tw_t$ between the two points becomes

\begin{equation}\label{11}
D_t(A_t,B_t^x)=\frac{1}{\sqrt{t}}\arccos (\frac{1}{\sqrt{tx^2+1}})=
\vert x-\frac{1}{3}tx^3+\frac{1}{5}t^2 x^5-\ldots \vert.
\end{equation}

The limit as $t\to 0$, $t>0$, of $D_t(A_t,B_t^x)$ is $\vert x\vert$, which is what the distance from $A_0$ to $B_0^x$ in $E_0$ ought to be.
\end{proof}
Thus, we define the distance $D_t$ on each $E_t$ to be the angular distance $w_t$ given in (\ref{eq:dis}) divided by $\sqrt{t}$. In this way, the distance function transits.

The fact that points and distances transit between the various geometries implies that triangles also transit. We already noted that angles also transit. From this, we get several properties. For instance, since angle bisectors may be defined using equidistance,  the following property transits between the various geometries: 

\begin{proposition}
The angle bisectors in a triangle intersect at a common point.
\end{proposition}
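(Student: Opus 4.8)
The plan is to prove the concurrence by the \emph{equidistance} characterization of an angle bisector, an argument that is insensitive to the value of $t$ and therefore transits across the whole family $(E_t)$.

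First I would fix, for each $t\in[-1,1]$, the notion of the distance $d_t(p,L)$ from a point $p$ to a line $L$ in $E_t$, namely the infimum of the distances $D_t(p,q)$ over $q\in L$; this infimum is realized along the perpendicular to $L$ through $p$, a notion already available from the metric (and, for $t>0$, from polarity). Since $D_t$ transits (Proposition \ref{p:normalization}) and since perpendicularity and angles transit, the function $d_t(p,L)$ transits coherently as well. Next I would record the key lemma: the bisector of the angle at a vertex $A$ of a triangle is exactly the set of points $p$ with $d_t(p,L_1)=d_t(p,L_2)$, where $L_1,L_2$ are the two side-lines through $A$ bounding the angle. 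This follows group-theoretically: the involution fixing the bisector is an isometry of $E_t$ that interchanges $L_1$ and $L_2$, so it preserves the two point-to-line distances while swapping the two lines; hence a point fixed by this reflection, i.e.\ a point on the bisector, is equidistant from $L_1$ and $L_2$, and conversely.

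With this in hand, the concurrence is the classical chase. Given a triangle with vertices $A,B,C$, the internal bisectors $\beta_A$ and $\beta_B$ are cevians entering the interior of the triangle; because the triangle is convex in each $E_t$, these two cevians meet at an interior point $P$. Being on $\beta_A$, the point $P$ satisfies $d_t(P,AB)=d_t(P,AC)$ (here $AB$, $AC$, $BC$ denote the three side-lines); being on $\beta_B$, it satisfies $d_t(P,AB)=d_t(P,BC)$. Transitivity of equality gives $d_t(P,AC)=d_t(P,BC)$, which by the lemma says precisely that $P$ lies on the third bisector $\beta_C$. Thus all three bisectors pass through $P$.

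Finally I would add the transition remark that closes the circle of the section: since the triangle, its angle bisectors, the point-to-line distances and the concurrence point $P=P_t$ all depend coherently (indeed continuously) on $t$, the statement holds simultaneously in the hyperbolic fibers $t<0$, in the spherical fibers $t>0$, and — passing to the limit $t\to 0$ — in the Euclidean fiber $E_0$. The main obstacle I anticipate is not the distance chase but the two geometric inputs it rests on: verifying the equidistance characterization of the bisector uniformly in $t$ (which is why the group-theoretic description of the reflection as an isometry is convenient), and guaranteeing that $\beta_A$ and $\beta_B$ actually intersect inside the triangle — the latter requires the convexity of triangles in each $E_t$ and, in the spherical case $t>0$, working in the projective (elliptic) plane rather than on the sphere so that two lines meet in a single point.
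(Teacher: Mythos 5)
Your proof is correct and follows exactly the route the paper intends: the paper offers no written proof of this proposition, only the preceding remark that ``angle bisectors may be defined using equidistance'' so that the property transits, and your equidistance characterization of the bisector (via the reflection being an isometry swapping the two side-lines) together with the classical three-way distance chase is precisely the argument being alluded to. The two points you flag as needing care --- the converse direction of the equidistance lemma restricted to the interior of the angle, and the existence of the intersection of two internal bisectors (convexity of the triangle for $t<0$, and working in the elliptic plane rather than on the sphere for $t>0$) --- are exactly the details the paper leaves to the reader, so your write-up is a faithful completion rather than a different proof.
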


The reader can search for other properties that transit between the various geometries.

Curvature is also a coherent notion. Indeed, curvature in spherical and in hyperbolic geometry is defined as the integral of the excess (respectively the defect) to two right angles of the angle sum in a triangle (and to a multiple of a right angle for more general polygons). This notion transits through Euclidean geometry, which is characterized by zero excess to two right angles. One can make this more precise, by choosing a triangle with fixed side lengths -- for instance with  three sides equal to a quarter of a circle -- and making this triangle transit through the geometries.

\section{Transition of trigonometric formulae} 

The trigonometric formulae, in a geometry, make relations between lengths and angles in triangles. They are at the bases of the geometry, since from these formulae, one can recover the geometric properties of the space. Let us recall the formal similarities between the trigonometric formulae in hyperbolic and in spherical geometry. One example is the famous ``sine rule", which is stated as follows:

For any  triangle $ABC$, with sides  (or side lengths) $a,b,c$ opposite to the vertices $A,B,C$, we have, in Euclidean geometry :
\[\frac{a}{\sin A} =\frac{b}{\sin B}= \frac{c}{\sin C}, \] in spherical geometry:
\[\frac{\sin a}{\sin A} =\frac{\sin b}{\sin B}= \frac{\sin c}{\sin C} \] and in hyperbolic geometry:
\[\frac{\sinh a}{\sin A} =\frac{\sinh b}{\sin B}= \frac{\sinh c}{\sin C} .\] 

Thus, to pass from the Euclidean to the spherical (respectively the hyperbolic), one replaces a side length by the cosine of the side length (respectively the sine of the length). Since using the sine rule one can prove many other trigonometric formulae, it is natural to expect that there are many occurrences of trigonometric formulae in non-Euclidean geometry where one replaces the circular functions of the side lengths (sin, cos, etc.) by the hyperbolic function of these side lengths, in order to get the hyperbolic geometry formulae from the spherical ones. Examples are contained in the following table. They concern a triangle $ABC$ having a right angle at $C$:
 
 \begin{table}[h]
 \begin{center}
 \label{table1}
 \renewcommand{\arraystretch}{1.25}
 \begin{tabular}{  | l | l | l | r || }
 \hline
Hyperbolic & Euclidean & Spherical \\ \hline  \hline
  $\cosh c = \cosh a \cosh b$ & $c^2= a^2+b^2$ &   $\cos c = \cos a \cos b$ \\ \hline
  $\sinh b = \sinh c \sin B$ & $b=c\sin B$ &    $\sin b = \sin c \sin B $  \\ \hline
     $\tanh a = \tanh c \cos B$ & $a=c\cos B $ & $\tan a = \tan c \cos B $\\ \hline
     $\cosh c = \cot A \cot B $& $1=\cot A\cot B$ &    $\cos c = \cot A \cot B $  \\ \hline
     $\cos A = \cosh a \sin B $&$ \cos A=\sin B$ &  $\cos A = \cos a \sin B $ \\ \hline
   $\tan a = \sinh b \tan A $& $a=b\tan A$ &  $\tan a = \sin b \tan A $  \\ \hline
 \end{tabular}
 \medskip
 \bigskip
 \end{center}
\end{table}

In this table, the Euclidean formulae in the middle column are obtained by taking Taylor series expansions of any of the two corresponding non-Euclidean ones, for side lengths tending to $0$. This is a consequence of the fact that the spherical and the hyperbolic geometries become Euclidean at the level of infinitesimal triangles.
 
The transitional geometry sheds a new light on the analogies between the trigonometric formulae.

Fisrt, we must recall that the above formulae are valid for spaces of constant curvatures $-1$, $0$ and $1$. In a geometry $E_t$, with $t\not=0$, one has to introduce a parameter $t$ in the above trigonometric formulae. For instance, for a right triangle $ABC$ with right angle at $C$, we have, in spherical geometry ($t>0$) :
\[\cos  \sqrt{t}c = \cos \sqrt{t}a\cos \sqrt{t}b\] 
and in hyperbolic geometry ($t<0$) :
\[\cosh \sqrt{-t}a = \cosh \sqrt{-t}b \cosh \sqrt{-t}c .\]

Now we can study the transition of this formula, called the \emph{Pythagorean theorem}.\footnote{By extension from the Euclidean case, a Pythagorean theorem, in a certain geometry, is a formula that makes relations between the edges and angles of a right triangle.}

We first study the transition of a triangle.

For $t\in [-1,1]$, let $a=(0,0,1)\in \mathbb{R}^3$ and consider the
coherent family of points $A_t\in E_t, t\in [-1,1]$ represented by the vector $a$. Take a real number $x$. For any $t\in [0,1]$, let $B_t^x\in E_t$ and $C_t^x\in E_t$ be the two coherent families of points represented by the stabilizers in 
 $I_t$ of the vectors $b^x=(x,0,1)$ and $c^x= (0,x,1)$ of $\mathbb{R}^3$. The family 
$\Delta_t(x,y)=(A_t,B_t^x,C_t^y)$, $t\in [0,1]$ is a coherent family of triangles.
 The triangle $\Delta_0(x,y)$ is a right triangle at $A$, with catheti ratio $x/y$. 
To see that each triangle $\Delta_t(x,y)$ is right, we note that if we change $B$ into $-B$, we do not change the angle value. (Recall the definition of the angle measure as the Haar measure on $\mathrm{SO}(2)$.) Therefore there is a transformation of the space $E_t$ that belongs to our isometry group and that sends this angle to its opposite. Therefore the angle is equal to its symmetric image, and therefore it is right. 
We saw (proof of Proposition 2.1)  that in the geometry $E_t$, the distance between the two points $A_t$ and $B_t^x$ is given by
\begin{equation}\label{11}
D_t(A_t,B_t^x)=\frac{1}{\sqrt{t}}\arccos (\frac{1}{\sqrt{tx^2+1}})
\end{equation}
  Likewise, the distance between the two points $A_t$ and $C_t^y$ is equal to 
  \begin{equation}\label{15}
D_t(A_t,C_t^y)=\frac{1}{\sqrt{t}}\arccos (\frac{1}{\sqrt{ty^2+1}})
\end{equation}

In the geometry $E_0$, the distance from  from $A_0$ to $C_0^y$ is equal to $\vert y\vert$.

We need to know the distance in $E_0$ between the points $B^x_0$ and $C^y_0$.

The angular distance from $B_t^x$ to $C_t^y$, measured with $q_t$, $t>0$, is  
\begin{eqnarray*}
w_t(B_t^x, C_t^y)&=&\arccos\ (\frac{\beta_t(b^x, c^y)}{\sqrt{q_t(b^x),q_t(c^y)}})\\
&=&
\arccos (\frac{1}{\sqrt{tx^2+1}\sqrt{ty^2+1}})\\
&=&\arccos (\frac{1}{\sqrt{t^2x^4y^4+tx^2+ty^2+1}}).
\end{eqnarray*}

Up to higher order terms,  this expression is equal to

\[\sqrt{ t^2x^4y^4+tx^2+ty^2}.\] 
 
We collect the information in the following lemma, using the fact that in the coherent geometry $E_0$, distances are obtained as a limit of normalized distances in $E_t$ for $t>0$ (or $t<0$). 
\begin{lemma}  In the geometry $E_t$, we have 
 \[
D_t(A_t,B_t^x)=\frac{1}{\sqrt{t}}\arccos (\frac{1}{\sqrt{tx^2+1}}),
\]
\[
D_t(A_t,C_t^y)=\frac{1}{\sqrt{t}}\arccos (\frac{1}{\sqrt{ty^2+1}}),
\]
and
\[D_t(B_t^x,C_t^y)= \frac{1}{\sqrt{t}}  \arccos (\frac{1}{\sqrt{tx^2+1}\sqrt{ty^2+1}}).\]
In the coherent geometry $E_0$, we
 have $$\lim_{t\to 0} D_t(A_t,B_t^x)= D_0(A_0,B_0^x)= \vert x\vert,$$ 
  $$\lim_{t\to 0} D_t(A_t,C_t^x)=  D_0(A_0,C_0^y)= \vert y\vert$$ and 
$$\lim_{t\to 0} D_t(B_t,C_t^x)=  D_0(B_0^x,C_0^y)= \sqrt{x^2+y^2}.$$
\end{lemma}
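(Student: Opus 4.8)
The plan is to treat the lemma as two separate assertions: the three exact distance formulas in $E_t$ for $t\neq 0$, and the three limiting values in $E_0$. The exact formulas I would obtain by direct substitution. Evaluating the quadratic form $q_t(x,y,z)=tx^2+ty^2+z^2$ and its polarization $\beta_t$ on the vectors $a=(0,0,1)$, $b^x=(x,0,1)$, $c^y=(0,y,1)$ gives $q_t(a)=1$, $q_t(b^x)=1+tx^2$, $q_t(c^y)=1+ty^2$, together with $\beta_t(a,b^x)=\beta_t(a,c^y)=\beta_t(b^x,c^y)=1$; the last equality holds because $b^x$ and $c^y$ carry their nonzero horizontal coordinates in different slots, so the $t$-weighted cross terms vanish and only the products of the $z$-coordinates survive. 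Plugging these into the angular distance (\ref{eq:dis}) and multiplying by $c_t=1/\sqrt{t}$ yields the three displayed formulas at once, with the product $(1+tx^2)(1+ty^2)$ appearing under the root for $D_t(B_t^x,C_t^y)$.

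For the first two limits I would not repeat any work: the computation already carried out in the proof of Proposition \ref{p:normalization} shows that $\frac{1}{\sqrt{t}}\arccos\frac{1}{\sqrt{1+tx^2}}=|x-\frac{1}{3}tx^3+\cdots|\to |x|$, and by symmetry in the roles of $x$ and $y$ the same computation gives $|y|$ for the pair $A_t,C_t^y$. So only the third limit carries genuinely new content.

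For that third limit, the key is to expand the argument of the arccosine by writing $(1+tx^2)(1+ty^2)=1+t(x^2+y^2)+t^2x^2y^2$, and then to use the elementary asymptotic $\arccos\frac{1}{\sqrt{1+s}}=\sqrt{s}\,(1+O(s))$ as $s\to 0$. This follows by chaining $1-\frac{1}{\sqrt{1+s}}=\frac{s}{2}+O(s^2)$ with $\arccos(1-\delta)=\sqrt{2\delta}\,(1+O(\delta))$. Setting $s=t(x^2+y^2)+t^2x^2y^2=t\bigl[(x^2+y^2)+tx^2y^2\bigr]$, I obtain $w_t(B_t^x,C_t^y)=\sqrt{t}\,\sqrt{(x^2+y^2)+tx^2y^2}\,(1+O(t))$, so that dividing by $\sqrt{t}$ and letting $t\to 0$ produces exactly $\sqrt{x^2+y^2}$.

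The hard part, such as it is, is the bookkeeping around the singular factor $1/\sqrt{t}$. Because this factor blows up, I must confirm that $\arccos(\cdots)$ vanishes at precisely the rate $\sqrt{t}$ and that the remainder is genuinely of higher order, so that the quotient has a finite nonzero limit and the $t^2x^2y^2$ cross term only perturbs the answer at order $t$. I would keep this transparent by combining the two one-variable Taylor estimates above rather than expanding a composite series termwise; isolating the leading $\sqrt{s}$ behaviour of the arccosine makes the $O(t)$ error control immediate and shows that the limit is unaffected by the quadratic-in-$t$ correction inside the radical.
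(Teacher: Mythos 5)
Your proposal is correct and follows essentially the same route as the paper: direct evaluation of $q_t$ and $\beta_t$ on $a$, $b^x$, $c^y$ to get the three exact formulas, reuse of the Proposition \ref{p:normalization} computation for the first two limits, and the asymptotic $\arccos\bigl(1/\sqrt{1+s}\bigr)\sim\sqrt{s}$ for the third. Your version is in fact slightly cleaner, since you correctly expand the product as $1+t(x^2+y^2)+t^2x^2y^2$ (the paper's displayed computation has a typo, $t^2x^4y^4$) and you justify the order of the error term explicitly where the paper only says ``up to higher order terms.''
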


Using this lemma, we now show that the Euclidean Pythagorean formula in $E_0$ is a limit of the Pythagorean formula in $E_t$ for $t>0$.

Let $x$ and $y$ be real numbers and let $t>0$. In the triangle $\Delta_t(x,y)$, the lengths of the three sides (measured in $E_t$) are the angular distances $\sqrt{t}D_t(A_t,B_t^x)$, $\sqrt{t}D_t(A_t,C_t^y)$ and $\sqrt{t}D_t(Cy_t,B_t^x)$. The first natural attempt is to write the Pythagorean theorem in spherical geometry, for the triangle with right angle at $A_t$:
 \begin{eqnarray*}
 \cos(\sqrt{t}D_t(A_t,B_t^x))
  \cos(\sqrt{t}D_t(A_t,C_t^y)) & = & \frac{1}{\sqrt{tx^2+1}\sqrt{ty^2+1}}\\
  &=& \cos(\sqrt{t}D_t(C^y_t,B_t^x)).
 \end{eqnarray*}

Taking the limit, as $t\to 0$ gives
\[1=1\times 1,\]
which is correct but which is not the Euclidean Pythagorean theorem.
We obtain a more useful result by taking another limit.

We transform the spherical Pythagorean theorem into the following one:
\[
\sqrt{t}\left(1-\cos(\sqrt{t}D_t(A_t,B_t^x))\cos(\sqrt{t}D_t(A_t,C_t^x))\right)=\]
\[
 \sqrt{t}\left(1-\cos(\sqrt{t}D_t(C^y_t,B_t^x))\right).\]
  Writing this equation at the first order using the expansion
 \[\frac{1}{\sqrt{t}}\arccos (\frac{1}{\sqrt{tx^2+1}})=\vert x-\frac{1}{3}tx^3+\frac{1}{5}t^2 x^5-\ldots \vert\] 
  and taking the limit as $t\to 0$, with $t>0$ (use Equation \ref{11}) gives
    \[D_0(A_0,B_0^x)^2 + D_0(A_0,C_0^y)^2=D_0(B_0^x,C_0^y)^2,\]
  which is the Pythagorean theorem in the geometry $E_0$. This is indeed the familiar Pythagorean theorem in Euclidean geometry.

 It is possible to do the same calculation in the hyperbolic case ($t<0$), where the hyperbolic Pythagorean theorem is:
\begin{proposition}
\begin{eqnarray*}
\cosh(\sqrt{-t}D_t(A_t,B_t^x)) \cosh(\sqrt{-t}D_t(A_t,C_t^y))= \cosh(\sqrt{-t}D_t(C_t^y,B_t^x)).
\end{eqnarray*}
\end{proposition}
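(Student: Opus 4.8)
The plan is to run the hyperbolic analogue of the spherical computation carried out just above, replacing the spherical angular distance by its hyperbolic counterpart and the circular cosine by the hyperbolic cosine. The one structural fact I would invoke is that for $t<0$ the quantity $\sqrt{-t}\,D_t$ is exactly the hyperbolic angular distance $w_t$ attached to the form $q_t$ (of signature $(-,-,+)$), and that on the hyperboloid $S_t$ this distance is governed by
\[
\cosh\bigl(w_t(u,v)\bigr)=\frac{\beta_t(u,v)}{\sqrt{q_t(u)}\,\sqrt{q_t(v)}},
\]
the hyperbolic law of cosines being the formal substitution $\cos\mapsto\cosh$ in the spherical relation $\cos w_t=\beta_t/\sqrt{q_t\,q_t}$ already used in the proof of Proposition \ref{p:normalization}.

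First I would record the relevant values of $q_t$ and $\beta_t$ on the three vectors $a=(0,0,1)$, $b^x=(x,0,1)$ and $c^y=(0,y,1)$ defining the coherent triangle $\Delta_t(x,y)$. These are $q_t(a)=1$, $q_t(b^x)=tx^2+1$, $q_t(c^y)=ty^2+1$, together with $\beta_t(a,b^x)=\beta_t(a,c^y)=1$ and, from $\beta_t(b^x,c^y)=t\cdot x\cdot 0+t\cdot 0\cdot y+1\cdot 1$, the value $\beta_t(b^x,c^y)=1$. For $|t|$ small enough that $tx^2+1>0$ and $ty^2+1>0$, all three vectors have positive $q_t$ and lie on the same sheet of $S_t$, so the three angular distances are well defined and the arguments of $\cosh$ above are $\geq 1$. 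Substituting these data into the displayed formula gives
\[
\cosh\bigl(\sqrt{-t}\,D_t(A_t,B_t^x)\bigr)=\frac{1}{\sqrt{tx^2+1}},\quad
\cosh\bigl(\sqrt{-t}\,D_t(A_t,C_t^y)\bigr)=\frac{1}{\sqrt{ty^2+1}},
\]
and
\[
\cosh\bigl(\sqrt{-t}\,D_t(C_t^y,B_t^x)\bigr)=\frac{1}{\sqrt{tx^2+1}\,\sqrt{ty^2+1}}.
\]
The asserted identity then reduces to the tautology $\tfrac{1}{\sqrt{tx^2+1}}\cdot\tfrac{1}{\sqrt{ty^2+1}}=\tfrac{1}{\sqrt{tx^2+1}\,\sqrt{ty^2+1}}$, exactly as in the spherical derivation.

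The only genuine point requiring care — the mild obstacle — is the first step: justifying that $\sqrt{-t}\,D_t$ is indeed the hyperbolic angular distance and that its hyperbolic cosine equals the normalized bilinear form. I would settle this by recalling that the right angle at $A_t$ has already been established group-theoretically (through the Haar-measure definition of angle and the symmetry $B\mapsto -B$), so that $\Delta_t(x,y)$ is a genuine right triangle in $E_t$ for $t<0$, and the displayed relation is precisely the law of cosines for the right angle on the curvature $-t^2$ hyperboloid, coming from the Minkowski inner product $\beta_t$. Everything else is the same bookkeeping as in the spherical case; in particular, expanding $\sqrt{t}\bigl(1-\cosh(\sqrt{-t}D_t)\cdots\bigr)$ to first order as $t\to 0^-$ recovers $D_0(A_0,B_0^x)^2+D_0(A_0,C_0^y)^2=D_0(B_0^x,C_0^y)^2$, so that the Euclidean Pythagorean theorem again emerges as the common transitional limit from both sides.
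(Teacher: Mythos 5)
Your proof is correct and follows the route the paper intends: the paper itself only says ``it is possible to do the same calculation in the hyperbolic case,'' and your computation is exactly that calculation, with $\cosh(\sqrt{-t}\,D_t(u,v))=\beta_t(u,v)/\bigl(\sqrt{q_t(u)}\sqrt{q_t(v)}\bigr)$ replacing the spherical relation and the same values $q_t(a)=1$, $q_t(b^x)=tx^2+1$, $q_t(c^y)=ty^2+1$, $\beta_t=1$ on all three pairs. The identity then reduces to the same product of reciprocals as in the spherical case, so nothing is missing.
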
  
 
 \section{Transition of angles and of area}\label{s:angle}
Using the Pythagorean theorem in spherical and hyperbolic geometry, it is possible to prove the other formulae in the above table, in particular the formulae
\[\cos A_t=\frac{\tan \sqrt{a_t}}{\tan \sqrt{b_t}}\]
in spherical geometry
and
\[\cos A_t=\frac{\tanh \sqrt{a_t}}{\tanh \sqrt{b_t}}\]
in hyperbolic geometry.

At the same time, we can study convergence of angles. For this take a triangle $ABC$ with right angle at $C$ and let $\alpha$ be the angle at $A$. For each $t$, the (lengths of the) sides opposite to $A,B,C$ are respectively $a_t,b_t,c_t$ in the geometry $E_t$. Take the side lengths $a$ and $b$ to be constant, $a_t=a$ and $b_t=b$.
For each $t$, we have, in the geometry $E_t$,
\[\cos \alpha_t = \frac{\tan \sqrt{t} a}{\tan  \sqrt{t} b}.\]
As $t\to 0$, we have $\cos \alpha_t \to a/b$. Thus, the angle $\alpha_t$ in the triangle $A_tB_tC_t$  converges indeed to the Euclidean angle. 

Note that this can also be used to define the notion of angle in each geometry. The reader can check that this definition amounts to the one we gave in \S \ref{s:trans}.

Finally we consider transition of area.

There are several works dealing with non-Euclidean area. For instance, in the paper \cite{Euler-Variae}, Euler obtained the following formula for the area $\Delta$ of a spherical triangle of side lengths $a,b,c$:
\begin{equation}\label{f:Euler}
\cos \frac{1}{2}\Delta= \frac{1+\cos a+\cos b+\cos c}{4\cos\frac{1}{2}a\cos\frac{1}{2}b\cos\frac{1}{2}c }.
\end{equation}
This formula should be compared to the Heron formula in Euclidean geometry that gives the area $\Delta$ of a triangle in terms of its side length. In the geometry $E_t$ ($t>0$), Euler's formula becomes
\[
 \cos \frac{1}{2}\Delta= \frac{1+\cos \sqrt{t}a+\cos  \sqrt{t}b+\cos  \sqrt{t} c}{4\cos\frac{ \sqrt{t}}{2}a\cos\frac{ \sqrt{t}}{2}b\cos\frac{ \sqrt{t}}{2}c }.
\]
 Setting $s=\sqrt{t}$ to simplify notation, we write this formula as
 \begin{equation} \label{Euler1} 
  \cos \frac{1}{2}\Delta= \frac{1+\cos sa+\cos  sb+\cos  s c}{4\cos\frac{ s}{2}a\cos\frac{ s}{2}b\cos\frac{ s}{2}c }.
  \end{equation}
We would like to see that Euler's formula (\ref{Euler1}) transits and leads to the area formula in Euclidean geometry for $t=0$. 
It suffices to deal with right triangles, and we therefore assume that the angle at $c$ is right. We use the Pythagorean theorem 
\[\cos  \sqrt{t}c = \cos \sqrt{t}a\cos \sqrt{t}b.\] 
 
We take the Taylor expansion in $t$ in Formula (\ref{Euler1}).

 Using the formula for the cosine of the double and taking the square, we obtain the formula \[
(\cos \frac{1}{2}\Delta)^2=\frac{(1+\cos sa + \cos sb + \cos sa \times \cos sb)^2}{2(1+\cos sa)(1+\cos sb)(1+\cos sa\times\cos sb).}
.\]
The degree-8 Taylor expansion of $1- (\cos \frac{1}{2}\Delta)^2$ is 
\[\frac{1}{16} a^2b^2s^4+ (\frac{1}{96} a^2b^4+\frac{1}{96}a^4b^2)+O(s^8).\]
The first term in this expansion is $\frac{1}{16} a^2b^2s^4$, that is, $\frac{1}{16} a^2b^2t^2$, which is the square of the expression $\frac{1}{4} abt$. Recall now that for the transition of the distance function (\S \ref{s:trans}), we had to normalize the length function in $E_t$, dividing it by $\sqrt{t}$. It is natural then, for the transition of the area function, to divide it by $t$. Thus, the result that we obtain is $\frac{1}{2} ab$, which is indeed the area of a Euclidean triangle with base $a$ and altitude $b$.

%
%There is a formula which is analogous to Euler's formula (\ref{f:Euler}) and which is valid for hyperboic triangles. This formula is obtained by replacing the $\cos$ of the length by the $\cosh$ of the length. Using this formula, we find that the area formula transits also from $t>0$ to $t=0$, leading again to the Euclidean area formula.
%
%Finally, we mention that all the formulae have a meaning for $t$ complex, and one can pass continuously from $\sin$ to $\sinh $ in the comlex domain. The question is to define and to examine the geometry underlying a complex value of $t$. 

Finally, we note that there has been a recent activity on transition of geometries in dimension $3$, namely, on moving continuously between the eight Thurston geometries, and also on varying continuously between Riemannian and Lorentzian geometries on orbifolds. We mention the works of Porti \cite{Porti2002}, \cite{Porti2010} and Porti and Weiss \cite{Porti-Weiss} and Cooper, Hodgson and Kerckhoff \cite{CHK},  Kerckhoff and Storm \cite{Kerckhoff-Storm} and Dancieger \cite{Dancieger}.  The ideas and the methods are different from those of the present paper.

As a conclusion, we propose the following problem.
\begin{problem}Extend  this theory of transition of geometries to a  parameter space which, instead of being the interval $[-1,1]$, is a neighborhood of the origin in the complex plane $\mathbb{C}$.
\end{problem}

This is in accordance with a well-established tradition to trying to extend to teh complex world. We recall in this respect the following words of Painlev\'e \cite{Painleve} ``Entre deux v\'erit\'es du domaine r\'eel, le chemin le plus facile et le plus court passe bien souvent par le domaine complexe." (Between two truths of the real domain, the easiest and shortest path quite often passes through the complex domain.) \footnote{Painelv\'e's sentence has been taken taken over by Hadamard in \cite{Hadamard}: ``It has been written that the shortest and best way between two truths of the real domain often passes through the imaginary one."} We also recall Riemann's words, from his \emph{Inaugural dissertation} \cite{Riemann-Grundlagen}, concerning the introduction of complex numbers:  ``If one applies these laws of dependence in an extended context, by giving the related variables complex values, there emerges a regulatrity and harmony which would otherwise have remained concealed."

\end{document}